\documentclass[a4paper,11pt,reqno]{article}

\usepackage{amsmath,amsthm,amsrefs,amssymb}

\textwidth=125mm
\textheight=195mm


\usepackage{color}


\newtheorem{theorem}{Theorem}

\newtheorem{lemma}[theorem]{Lemma}
\newtheorem*{maintheorem}{Main Theorem}

\theoremstyle{definition}

\newtheorem{remark}[theorem]{Remark}


\makeatletter

\DeclareMathOperator{\sing}{sing}
\newcommand{\singZ}{\#}

\DeclareMathOperator{\CAT}{CAT}

\newcommand{\uu}[1]{\underline{\underline{#1}}}

\DeclareMathOperator{\pd}{pd}
\DeclareMathOperator{\cd}{cd}
\DeclareMathOperator{\gd}{gd}

\newcommand{\calO}{\mathcal{O}}

\newcommand{\uupd}{\mathop{\uu\pd}}
\newcommand{\uucd}{\mathop{\uu\cd}}
\newcommand{\uugd}{\mathop{\uu\gd}}

\newcommand{\ucd}{\mathop{\underline\cd}}
\newcommand{\ugd}{\mathop{\underline\gd}}

\newcommand{\frakF}{\mathfrak{F}}
\newcommand{\Ffin}{\frakF_{\text{fin}}}
\newcommand{\Fvc}{\frakF_{\text{vc}}}

\newcommand{\OC}[2]{\mathop{\calO_{#2}#1}\nolimits}

\newcommand{\OFG}{\OC{G}{\frakF}}

\DeclareMathOperator{\Ext}{Ext}

\newcommand{\ModOFG}{\mathop{{\operator@font Mod\text{-}}\calO_{\frakF}G}}
\newcommand{\OFGMod}{\mathop{\calO_{\frakF}G\text{-}{\operator@font Mod}}}

\newcommand{\Z}{\mathbb{Z}}

\newcommand{\OFW}{\mathcal{O}_{\frakF}W}

\newcommand{\homotop}{\simeq}
\newcommand{\isom}{\cong}

\renewcommand{\:}{\mathord{:}\hspace*{1ex plus .25ex minus .3ex}}

\makeatother



\begin{document}

\title{An Eilenberg--Ganea Phenomenon for
Actions with Virtually Cyclic Stabilisers}

\author{Martin G.~Fluch\\ 
Bielefeld University\\
PO Box 100131\\
33501 Bielefeld\\
Germany\\
\texttt{mfluch@math.uni-bielefeld.de}
\and 
Ian J.~Leary\\
School of Mathematics\\
University of Southampton\\
Southampton  SO17 1BJ\\
United Kingdom\\
\texttt{I.J.Leary@soton.ac.uk}}

\maketitle

\begin{abstract}
In dimension $3$ and above, Bredon cohomology gives an accurate purely
algebraic description of the minimal dimension of the classifying space for
actions of a group with stabilisers in any given family of subgroups. For some
Coxeter groups and the family of virtually cyclic subgroups we show that the
Bredon cohomological dimension is~$2$ while the Bredon geometric dimension
is~$3$.
\end{abstract}

\thispagestyle{empty}


\section{Introduction and Preliminaries}

For a discrete group $G$, a family of subgroups $\frakF$ is a non-empty
collection of subgroups of $G$ that is closed under conjugation and taking
subgroups.  If $\frakF$ is a family of subgroups of $G$ then a model for
$E_{\frakF}G$, the classifying space for $G$-actions with stabilisers in
$\frakF$, is a $G$-CW-complex $X$ such that for $H\leq G$, the fixed point
set~$X^{H}$ is empty if $H\notin\frakF$ and is contractible if~$H\in \frakF$.
For any $G$ and $\frakF$ there is always a model for $E_{\frakF}G$ and it is
unique up to equivariant homotopy.

In the case when $\frakF$ consists of just the trivial group,
$E_{\frakF}G$ is the same thing as~$EG$, the universal cover of an
Eilenberg--Mac~Lane space for~$G$.  In the case when~$\frakF$ is the 
family $\Ffin(G)$ of all finite subgroups of~$G$ (respectively the family
$\Fvc(G)$ of all virtually cyclic
subgroups of~$G$) we write $\underline EG$ (respectively $\uu EG$) for
$E_{\frakF}G$.  The minimal dimension of any model for $E_{\frakF}G$
is denoted by $\gd_{\frakF}G$ and is called the \emph{Bredon
  geometric dimension} of $G$.

Homological algebra over the group ring $\Z G$ can be used to study
models for~$EG$, and Bredon cohomology is the natural generalisation
for studying models for~$E_{\frakF}G$.  In Bredon cohomology the orbit
category $\OFG$ replaces the group~$G$.  The orbit category $\OFG$ is
the category with objects the $G$-sets $G/H$ with $H\in \frakF$ and
$G$ maps as morphisms.  A (right) $\OFG$-module is then a
contravariant functor from the orbit category $\OFG$ to the category
of abelian groups.  In the case when $\frakF$
consists of just the trivial group,~$\OFG$ is a category with one object and
morphism set~$G$ and~$\OFG$-modules are the same as $\Z G$-modules. 

The category of $\OFG$-modules is an abelian category with enough projectives. 
The \emph{Bredon cohomological dimension} $\cd_{\frakF}G$ is defined to be the
projective dimension of the trivial $\OFG$-module $\underline \Z$, which takes
the value~$\Z$ on any object of $\OFG$ and which maps any morphism to the
identity.  The derived functors of the morphism functor in the category of
Bredon modules over $\OFG$ are denoted by $\Ext^*_\frakF(-,-)$. The \emph{Bredon
cohomology groups} of $G$ with coefficients the $\OFG$-module $M$ are the
abelian groups~$H_\frakF^*(G; M) = \Ext_\frakF^*(\underline\Z; M)$. For details
on Bredon cohomology we refer to~\cite{luck-89} or~\cite{fluch-phdthesis}.

If the family $\frakF$ consists of the trivial subgroup only, then
$\gd_\frakF G$ is the minimal dimension $\gd G$ an Eilenberg--Mac~Lane space for
$G$ can have. If~$\frakF$ is the family~$\Ffin(G)$ 
(respectively $\Fvc(G)$) then we use the notation $\ugd G$ (respectively~$\uugd
G$) for~$\gd_\frakF G$.

\medskip

As in the classical case a model for $E_\frakF G$ gives rise to a resolution of
the trivial $\OFG$-module~$\underline\Z$ by projective $\OFG$-modules. Therefore
$\cd_\frakF G \leq \gd_\frakF G$ in general. If $\cd_\frakF G\geq 3$, then
$\cd_\frakF G=\gd_\frakF G$. In the classical case, that is when $\frakF=\{1\}$
consists only of the trivial subgroup, this is due to
Eilenberg--Ganea~\cite{eilenberg-57}. For $\frakF = \Ffin(G)$ this was proved
in~\cite{luck-89} and this proof generalises to arbitrary families~$\frakF$,
cf.~Theorem~0.1 in~\cite{luck-00}*{p.~294}. In the classical case it is well
known that $\cd_\frakF G=0$ implies $\gd_\frakF G=0$ and for general families
this implication follows from Lemma~2.5 in~\cite{symonds-05}*{p.~265}.

In the classical case, the statement that the cohomological and geometric
dimension always agree is known as the Eilenberg--Ganea Conjecture. Since the
work of Stallings~\cite{stallings-68a} and Swan~\cite{swan-69} implies that $\cd
G=1$ if and only if $\gd G=1$, this conjecture can only be falsified by a group
$G$ with~$\cd G=2$ but $\gd G=3$.

In \cite{brady-01} right-angled Coxeter groups $W$ such that $\ucd
W=2$ but $\ugd W=3$ were exhibited.  We show here that some, but not
all, of these examples have a similar property for actions with
virtually cyclic stabilisers.

\begin{maintheorem}
  Let $(W,S)$ be a right-angled Coxeter system for which the nerve
  $L=L(W,S)$ is an acyclic $2$-complex that cannot be embedded in any
  contractible $2$-complex.
  \begin{itemize}
  \item If $W$ is word hyperbolic, then
    \begin{equation*}
      \uucd W = 2 \qquad \text{and} \qquad \uugd W = 3.
    \end{equation*}
    
  \item If $W$ is not word hyperbolic, then
    \begin{equation*}
      \uucd W = \uugd W \geq 3.
    \end{equation*}
  \end{itemize}
\end{maintheorem}

A right angled Coxeter group $W$ is word hyperbolic if and only if its
nerve $L$ satisfies the so called ``flag no squares condition'',
cf.~\cite{davis-08}*{p.~233}. By Proposition~2.1 of~\cite{dranishnikov-99} the
``flag no squares condition'' puts no restriction on the homeomorphism type of
the $2$-complex~$L$ (or see~\cite{brady-01}*{p.~498} for an explicit example
for a suitably triangulated~$L$). Therefore it follows from our theorem, that
the Bredon analogue of the Eilenberg--Ganea Conjecture is false 
for the family of virtually cyclic subgroups.

The proof of the non-word hyperbolic case of our Main Theorem is the
easy part and is described in Section~\ref{sec:non-wh-case}. The word
hyperbolic case is Theorem~\ref{thrm:gd} and~\ref{thrm:cd} combined.

\medskip

As mentioned before, in the classical case $\cd_\frakF G=1$
implies $\gd_\frakF G=1$ by the work of Stallings and Swan. It follows from
Dunwoody's Accessibility Theorem~\cite{dunwoody-79}, that the same
implication is true in the case that $\frakF=\Ffin(G)$. In the light
of this one may ask, whether this implication also holds in the
case that $\frakF=\Fvc(G)$. The first author obtained in his thesis a
positive answer for countable, torsion-free, soluble
groups~\cite{fluch-phdthesis}*{p.~127}. In this class, the groups $G$
with $\uucd G=1$ are precisely the subgroups of the rational numbers
which are not finitely generated and for these groups $\uugd G=1$
holds. However, a general answer to this question is still open.

\subsection*{Acknowledgments}

The first author is grateful for the support of the CRC~701 of the DFG.


\section{Coxeter Groups and the Davis Complex}

A \emph{Coxeter matrix} is a symmetric matrix $M=(m_{st})$ indexed by
a finite set~$S$ and with entries integers or $\infty$ subject to the
conditions that for all~$s,t\in S$
\begin{enumerate}
    \item $m_{st} = 1$ if $s=t$, and
    
    \item $m_{st} \geq 2$ otherwise.
\end{enumerate}
Associated to a Coxeter matrix $M$ one has the \emph{Coxeter group}
$W$ given by the presentation
\begin{equation*}
  W = \langle S \mid (st)^{m_{st}}=1 \text{ for all $s,t\in S$ with
    $m_{st}\neq \infty$} \rangle.
\end{equation*}
The Coxeter group $W$ is \emph{right-angled} if the finite
off-diagonal entries of the Coxeter matrix are all equal to $2$.  The
elements of $S$ are called the \emph{fundamental Coxeter generators} of the
Coxeter group~$W$ and the pair $(W,S)$ is called a \emph{Coxeter
  system}.  If $T\subset S$, then $W_T$ denotes the subgroup of $W$
generated by~$T$ and these subgroups are called \emph{special}.

The \emph{nerve} $L=L(W,S)$ of a Coxeter system $(W,S)$ is the
simplicial complex with vertex set $S$ and whose simplices are the
non-empty subsets~$T\subset S$ for which the special subgroup $W_T$ is
finite.

Given a Coxeter system $(W,S)$ the Davis Complex $\Sigma = \Sigma(W,S)$ is a
contractible simplicial complex on which $W$ acts with finite stabilisers; the
action of the fundamental generators $S$ is by reflections.  This complex has
been introduced in~\cite{davis-83} and it can interpreted as the barycentric
subdivison of a cell complex where the cells are in bijective correspondence
with the cosets of finite special subgroups of~$W$. This cell complex admits in
a natural way a piecewise Euclidean metric and this metric can be shown to be
$\CAT(0)$.  The links of the $0$-cells of this complex can be identified with
the nerve $L$. The full subcomplex of $\Sigma$ whose vertices correspond to the
identity cosets of the finite special subgroups is denoted by $K$. It is a
fundamental domain of the action of $W$ and it can be realised as the cone
of~$L$, where~$L$ is identified with the boundary $\partial K$ in $\Sigma$.  For
details see~\cite{davis-08}.

If $(W,S)$ is a right angled Coxeter system, then its nerve is a flag
complex~\cite{davis-08}*{p.~125}.  Conversely, if we are given a finite flag
complex $L$, then we can construct a Coxeter system $(W,S)$ such that $L$ is its
nerve as follows: let $S$ be the set of vertices of $L$ and for $s\neq t$ set
$m_{st} = 2$ if $s$ and $t$ are adjacent in $L$ and set $m_{st} = \infty$ if no
edge connects $s$ and $t$ in $L$.


\section{The Non-Hyperbolic Case}
\label{sec:non-wh-case}

It suffices to show that $\uucd W\geq 3$. For this it is enough to show that
$W$ contains a subgroup $H$ with $\uucd H\geq 3$. Since $W$ is not
word hyperbolic it contains a subgroup isomorphic to
$\Z^2$~\cite{davis-08}*{p.~241}.

We show that $\uu E\Z^2=3$ using an explicit $3$-dimensional model $X$ for 
$\uu E\Z^2$, which was first described by Farrell.  See~\cite{farrell-jones} 
for a general construction containing this as a special case, or 
see~\cite{juan-pineda-06}
for a description of $X$ and a computation of $H_*(X/\Z^2; \Z)$ from which it
follows that $H^3(X/\Z^2; \Z)$ is a countable direct product of copies of
$\Z$.  
Theorem~4.2 in~\cite{fluch-phdthesis}*{p.~83} states that $H^{3}(X/\Z^{2})
\isom H^{3}_{\Fvc(\Z^2)}(\Z^{2}; \underline\Z)$. Hence it follows that $\uucd
\Z^2= 3$.



\section{The Geometric Dimension in the Hyperbolic Case}

Given a Coxeter system $(W,S)$ and a $W$-space $X$ we set
\begin{align*}
    X^{\singZ} & = \bigcup_{s\in S} X^{s}
    \\
    \intertext{and}
    X^{\sing} & = \{ x\in X \mid W_{x} \neq 1\}.
\end{align*}
Clearly $X^{\singZ}\subset X^{\sing}$.

\begin{lemma}
    \label{lem:K-convex}
    Let $K \subset \Sigma$ be the fundamental chamber of $\Sigma$ and 
    let $s\in S$. Then both $K$ and $K\cup sK$ are convex subsets of 
    $\Sigma$.
\end{lemma}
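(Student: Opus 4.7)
The plan is to write each of $K$ and $K \cup sK$ as an intersection of closed half-spaces of $\Sigma$ bounded by walls (the fixed-point sets of reflections $w s w^{-1}$ for $w \in W$). Since $\Sigma$ is $\CAT(0)$ and each wall is the fixed set of an isometric involution, a standard reflection argument shows that each of the two closed half-spaces bounded by a wall is convex; consequently any intersection of such half-spaces is convex.

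For $K$, I would show $K = \bigcap_{H} \Sigma^{+}_H$, where $H$ ranges over all walls of $\Sigma$ and $\Sigma^{+}_H$ denotes the closed half-space bounded by $H$ that contains $K$. The nontrivial inclusion is $\supseteq$: if $x \in \Sigma$ lies in every $\Sigma^{+}_H$, use that $K$ is a strict fundamental domain to write $x = g y$ with $g \in W$ and $y \in K$, and assume for contradiction $g \neq 1$. A minimal gallery from $K$ to $gK$ crosses at least one wall that separates the two chambers, so $x$ lies in the half-space opposite $K$ across that wall, violating the hypothesis. Hence $g = 1$ and $x \in K$.

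For $K \cup sK$ the analogous description is $K \cup sK = \bigcap_{H \neq H_s} \Sigma^{+}_H$. The key observation is that the number of walls separating $K$ from a chamber $gK$ equals the word length $\ell(g)$; taking $g = s$ yields that $H_s$ is the unique wall separating $K$ from $sK$, so both $K$ and $sK$ lie in $\Sigma^{+}_H$ for every $H \neq H_s$, proving $\subseteq$. Conversely, if $x \in gK$ with $g \notin \{1, s\}$, then either $\ell(g) \geq 2$ so that at least two walls are crossed by any minimal gallery and at least one of them differs from $H_s$, or $\ell(g) = 1$ with $g = t \in S \setminus \{s\}$, in which case the single crossed wall $H_t$ differs from $H_s$; either way $x$ fails the intersection condition.

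The main technical inputs are the $\CAT(0)$ fact that walls bound convex half-spaces and the reflection-group fact that the walls separating $K$ from $gK$ are enumerated by the reflections appearing in any reduced expression for $g$. Both are standard for the Davis complex and can be cited from \cite{davis-08}; once invoked, convexity of $K$ and $K \cup sK$ follows from the general principle that intersections of convex subsets of a $\CAT(0)$ space are convex. The slightly delicate step to get right is the enumeration of separating walls, since that is what pins down exactly which half-spaces to intersect in each case.
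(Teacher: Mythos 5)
Your argument is correct in outline, but it takes a genuinely different route from the paper. The paper works only with the $|S|$ walls $\Sigma^{t}$, $t\in S$, of the fundamental chamber itself: it writes $K=\bigcap_{t\in S}\overline{H}^{+}_{t}$ as an intersection of the convex closed half-spaces bounded by these walls, and then disposes of $K\cup sK$ with the identity $K\cup sK=K_{0}\cap sK_{0}$, where $K_{0}=\bigcap_{t\neq s}\overline{H}^{+}_{t}$ — so the union is exhibited as an intersection of just two convex sets, with no need to enumerate the walls of $\Sigma$ globally or to invoke the correspondence between separating walls and reduced expressions. You instead intersect over \emph{all} walls of $\Sigma$ (respectively all walls except $H_{s}$) and control the result via gallery distances and the fact that exactly $\ell(g)$ walls separate $K$ from $gK$. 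Both approaches rest on the same $\CAT(0)$ input (closed half-spaces bounded by a reflection wall are convex); yours needs more combinatorial machinery but generalises readily to unions of chambers along longer galleries, while the paper's is shorter and self-contained. One point you should tighten: the element $g$ with $x\in gK$ is not unique when $x$ lies on a wall, so ``assume $g\neq 1$ and derive a contradiction'' is not quite right as stated — a point of $K\cap tK$ lies in the \emph{closed} half-space on both sides of $H_{t}$ and violates nothing. You should take $g$ of minimal length among those with $x\in gK$ and observe that $x$ cannot lie on a separating wall $H$ (else reflecting in $H$ would shorten $g$), which gives the strict separation your contradiction requires.
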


\begin{proof}
    For each $t\in S$ the fixedpoint set $\Sigma^{t}$ separates
    $\Sigma$ into two connected half spaces.  Denote by $H_{t}^{-}$
    the half space which does not intersect $K$ and denote by
    $\overline H_{t}^{+}$ the complement of $H^{-}_{t}$.  Then
    $\overline H^{+}_{t}$ is a convex subset of~$\Sigma$ containing~$K$. 
Then $K = \bigcap_{t\in S} \overline H_{t}^{+}$ is a convex
    subset of $\Sigma$.  Finaly,~$K\cap sK$ is convex since $K 
    \cup sK =
    K_{0}\cap sK_{0}$ where $K_{0}$ is the convex set $K_{0} =
    \bigcap_{t\in S\setminus \{s\}} \overline{H}^{+}_{t}$.
\end{proof}

\begin{lemma}
    \label{lem:aux1}
    Let $X$ be a model for $\underline EW$.  Then $X^{\singZ}$ is
    homotopy equivalent to $L$.
\end{lemma}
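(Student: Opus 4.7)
\emph{Proof plan.}
The plan is to apply the nerve lemma to the cover of $X^{\singZ}$ by the closed subspaces $X^{s}$, $s\in S$, and to identify the resulting nerve with $L$.

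First, for any non-empty subset $T\subseteq S$ one has
\[
\bigcap_{s\in T} X^{s} = X^{W_{T}},
\]
since a point is fixed by every element of $T$ if and only if it is fixed by the subgroup $W_{T}=\langle T\rangle$. Because $X$ is a model for $\underline EW$, the fixed-point set $X^{H}$ is contractible when $H\leq W$ is finite and empty when $H$ is infinite. By the definition of the nerve, $W_{T}$ is finite exactly when $T$ is a simplex of $L$. Consequently $\bigcap_{s\in T}X^{s}$ is contractible whenever $T$ is a simplex of $L$ (in particular each $X^{s}$ is itself non-empty and contractible, since $\langle s\rangle$ is of order two) and is empty otherwise.

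Next, after replacing $X$ by a $W$-CW model---which we may do, as models for $\underline EW$ are unique up to $W$-equivariant homotopy equivalence---each $X^{s}$ is a subcomplex, and so are the intersections $X^{W_{T}}$. We thus have a closed cover of $X^{\singZ}$ whose non-empty intersections are precisely the contractible subcomplexes $X^{W_{T}}$ indexed by the simplices $T\in L$. The nerve lemma then yields a homotopy equivalence from $X^{\singZ}$ to the nerve of this cover, which is by construction the simplicial complex on vertex set $S$ whose simplices are the $T\subseteq S$ with $W_{T}$ finite, i.e.\ $L$ itself.

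The main technical obstacle is arranging the hypotheses of the nerve lemma; the reduction to a $W$-CW model secures this, guaranteeing that the cover is of the form (closed subcomplexes with contractible non-empty intersections) required by the standard statement. Once this is in place, the rest of the argument is essentially formal.
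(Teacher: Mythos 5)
Your argument is correct, and it takes a genuinely different route from the paper's. The paper first replaces $X$ by the Davis complex $\Sigma$ (using uniqueness of $\underline EW$ up to $W$-homotopy equivalence), then uses the $\CAT(0)$ nearest-point projection onto the fundamental chamber $K$ to show that $\Sigma^{\singZ}$ retracts onto the union of mirrors $K^{S}$, and finally quotes Davis for $K^{S}\homotop L$. You instead work with an arbitrary model directly: you cover $X^{\singZ}$ by the subcomplexes $X^{s}$, observe that $\bigcap_{s\in T}X^{s}=X^{W_{T}}$ is contractible precisely when $T$ is a simplex of $L$ and empty otherwise, and invoke the nerve lemma for a finite cover of a CW-complex by subcomplexes (finiteness of $S$ and the subcomplex/cofibration condition are exactly what make the closed-cover nerve lemma legitimate here, and your reduction to a $W$-CW model -- which is in fact automatic, since models for $\underline EW$ are by definition $W$-CW-complexes and fixed-point sets of subgroups are subcomplexes -- secures this). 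What your approach buys is self-containedness and transparency: it avoids the Davis complex and $\CAT(0)$ geometry entirely and makes visible that $L$ is literally the nerve of the cover by reflection walls; indeed Davis's own proof that $K^{S}\homotop L$ is a nerve-lemma argument for the mirrors of $K$, so you have in effect applied that argument to an arbitrary model in one step. What the paper's approach buys is an explicit geometric retraction within the Davis-complex framework already set up for the rest of the article, which the authors reuse in the subsequent remark about Proposition~4 of Brady--Leary--Nucinkis.
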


\begin{proof}
    Since $X$ is $W$-homotopy equivalent to $\Sigma$ it follows that
    $X^{\singZ}$ is homotopy equivalent to $\Sigma^{\singZ}$.  Thus it
    is enough that $\Sigma^{\singZ}$ is homotopy equivalent to $L$.
        
    Let $K$ be the fundamental chamber of $\Sigma$.  Then $K$ is
    complete and compact and due to Lemma~\ref{lem:K-convex} also
    convex.  Therefore, since $\Sigma$ is a $\CAT(0)$ space, there exists
    a retraction of $\Sigma$ onto $K$ which sends every point $x\in
    \Sigma\setminus K$ to the unique point $\pi(x)$ of~ $K$ which is
    nearest to $x$, cf.~\cite{bridson-99}*{p.176f.}.
    
    Let $K^{S}$ the union of all mirrors of $K$, that is 
    \begin{equation*}
	K^{S} = \{x\in K \mid x\in K\cap sK \text{ for some $s\in
	S$}\},
    \end{equation*}
    cf.~\cite{davis-08}*{p.~63, p.~127}. The set $K^{S}$ is homotopy
    equivalent to $L$~\cite{davis-08}*{p.~127}.
    
    Let $s\in S$ and $x\in \Sigma^{\singZ}\setminus K$.  Let $y = \pi(x)$. 
    Then~$sy\in sK$ and since~$K\cup sK$ is convex it follows that the midpoint
$m$   of the geodesic joining~$y$ and~$sy$ is contained in $K\cup sK$.  Since
$y$   and $sy$ have the same distance from $K\cap sK$ it follows that $m\in
K\cap   sK$. In particular $m\in K$. Since $x\in \Sigma^{\singZ}$ it follows
that   $d(x,y) = d(sx,sy) = d(x,sy)$. Since the metric of~$\Sigma$ is $\CAT(0)$
it   follows that $d(x,m) \leq \max(d(x,y), d(x,sy)) = d(x,y)$.  By the   
uniqueness of the point $\pi(x)$ it follows that $m=y$.  Hence $y\in K^{S}$.
    
    It follows that the homotopy equivalence $\Sigma\homotop K$
    restricts to a homotopy equivalence $\Sigma^{\singZ}\homotop K^{S}$. 
    Thus $X^{\singZ}\homotop L$.
\end{proof}

\begin{remark}
  The above lemma could be used to give a slightly different proof of
  the main assertion of Proposition~4 of~\cite{brady-01}*{p.~497}.
\end{remark}

\begin{lemma}
    \label{lem:aux2}
    Let $X$ be a model for $\uu EW$. If $W$ is word hyperbolic, then
    $X^{\singZ}$ is homotopy equivalent to
    \begin{equation*}
      L \, \vee \bigvee_{i\in I} S^{1}
    \end{equation*}
    where the index set $I$ consists of all maximal infinite virtually
    cyclic subgroups of $W$ which contain at least two non-commuting
    Coxeter generators.
\end{lemma}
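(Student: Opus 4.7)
The natural approach is to bootstrap from Lemma~\ref{lem:aux1}. Take $\Sigma$ to be the Davis complex, which is a model for $\underline EW$, so that $\Sigma^{\singZ} \simeq L$. I would then construct a specific model $X$ for $\uu EW$ from $\Sigma$ via a pushout: for each $W$-conjugacy class $[V]$ of maximal infinite virtually cyclic subgroups of $W$, attach $W \times_{N_{W} V} \underline{\underline E}(N_{W} V)$ to $\Sigma$ along $W \times_{N_{W} V} \underline E(N_{W} V)$, where the attaching map $\underline E V \to \Sigma$ is the $V$-equivariant inclusion of some $V$-invariant subspace (such as the axis of a hyperbolic core of $V$). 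Since $W$ is word hyperbolic and $V$ is maximal, $N_{W} V$ is itself virtually cyclic, so $\underline{\underline E}(N_{W} V)$ may be taken to be a single point $p_{V}$ fixed by $V$.

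To compute $X^{\singZ}$ I analyse the effect of each conjugacy class $[V]$ on the singular set. Most $W$-translates of the attached cells sit away from $X^{\singZ}$ because they meet no wall $X^{s}$; the only conjugates that contribute are those for which $wVw^{-1}$ actually contains an element of $S$. A key combinatorial observation is that any virtually cyclic subgroup of $W$ contains at most one non-commuting pair of Coxeter generators: otherwise it would contain a special subgroup $W_{T}$ with $|T| \geq 3$ and at least two non-edges of $L$ in $T$, and such a $W_{T}$ contains a copy of $\mathbb{Z}/2 \ast (\mathbb{Z}/2 \times \mathbb{Z}/2)$ or $\mathbb{Z}/2 \ast \mathbb{Z}/2 \ast \mathbb{Z}/2$, hence is not virtually cyclic. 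This yields a canonical bijection between $I$ and the set of non-commuting pairs $\{s, t\} \subseteq S$, with each $V \in I$ being the unique maximal infinite virtually cyclic subgroup containing its associated $\langle s, t \rangle \cong D_{\infty}$.

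For each pair $\{s, t\}$ associated to $V \in I$, one has $\Sigma^{s} \cap \Sigma^{t} = \emptyset$ in the Davis complex (walls of non-commuting generators do not meet), but $p_{V} \in X^{s} \cap X^{t}$. Because $\Sigma^{s}$ and $\Sigma^{t}$ are already joined within $\Sigma^{\singZ} \simeq L$ through a chain of walls of commuting generators, the addition of $p_{V}$ creates one additional loop, contributing a single $S^{1}$ wedged onto $L$. The main obstacle is to carry out a careful Mayer--Vietoris (or nerve-theoretic) analysis of the cover $\{X^{s}\}_{s \in S}$, augmented by the new fixed-point cells, in order to verify that the contributions from distinct $V \in I$ assemble as an honest wedge, that conjugates $wVw^{-1}$ containing no Coxeter generator genuinely contribute nothing, and that the higher intersections $X^{W_{T}}$ for $T$ not a simplex of $L$ do not introduce spurious higher homotopy. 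The conclusion is then $X^{\singZ} \simeq L \vee \bigvee_{i \in I} S^{1}$.
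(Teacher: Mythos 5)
Your construction of the model is essentially the one the paper uses: attaching $W\times_{N_WV}\uu E(N_WV)$ along $W\times_{N_WV}\underline EV$ with $\uu E(N_WV)$ a single point is precisely the Juan--Pineda--Leary procedure of coning off a $1$-dimensional $V$-invariant copy $Z_V$ of $\underline EV$ embedded in $\Sigma$, one for each maximal infinite virtually cyclic subgroup; and your combinatorial observation that a virtually cyclic subgroup of $W$ contains at most one non-commuting pair of Coxeter generators is a correct sharpening of the fact the paper invokes (that it contains at most two pairwise non-commuting generators). The problem is that you stop exactly where the proof has to be made. You explicitly defer to ``a careful Mayer--Vietoris (or nerve-theoretic) analysis of the cover $\{X^s\}_{s\in S}$'' the verification that the contributions assemble as a wedge, that conjugates containing no generator contribute nothing, and that no higher homotopy appears --- and you do not carry this out. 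That analysis is moreover aimed at a slightly wrong object: the new singular points are not just the cone points $p_V$ but the entire cones on $Z_V\cap\Sigma^{\singZ}$, and the circle is not produced by $\Sigma^s$ and $\Sigma^t$ being ``joined through a chain of walls''; it is produced by the cone supplying a new arc between the two points of $Z_V\cap\Sigma^{\singZ}$.

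The missing idea, which is the actual content of the paper's proof, is that no covering argument is needed at all: passing from $\Sigma$ to the coned-off model changes the singular set only by coning off, for each maximal infinite virtually cyclic $H$, the finite set $Z_H\cap\Sigma^{\singZ}$. This set consists of at most two points --- the fixed points on the line $Z_H$ of the Coxeter generators lying in $H$, where two such generators share a fixed point if and only if they commute --- and it has exactly two points precisely when $H\in I$. Coning off a one-point subset of a path-connected space does not change its homotopy type, while coning off a two-point subset is, up to homotopy, wedging on a single $S^1$. Since $\Sigma^{\singZ}\simeq L$ is path-connected by Lemma~\ref{lem:aux1}, these modifications can be performed one $H$ at a time and yield $L\vee\bigvee_{i\in I}S^1$ directly, with no Mayer--Vietoris bookkeeping over $\{X^s\}_{s\in S}$ and no issue of spurious higher homotopy. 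Without this (or an executed substitute for it), your argument establishes only that each $V\in I$ ought to contribute a loop, not the stated homotopy equivalence.
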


\begin{proof}
  Let $Y$ be the model for $\uu EW$ which is obtained from $\Sigma$ as
  described in~\cite{juan-pineda-06}.  This construction yields for
  every maximal infinite virtually cyclic subgroup $H$ of $W$ a
  $1$-dimensional model $Z_{H}$ for $\underline EH$ together with an
  $H$-equivariant embedding $f_{H}\: Z_{H}\to \Sigma$.  We identify
  $Z_{H}$ with its image in $\Sigma$ under this embedding.  Then $Y$
  is obtained by coning off the sets $Z_{H}$ and extending the
  $W$-action suitably.

  Since $X$ is $W$-homotopy equivalent to $Y$ it follows that
  $X^{\singZ}$ is homotopy equivalent to $Y^{\singZ}$.  The set
  $Y^{\singZ}$ is obtained from $\Sigma^{\singZ}$ by coning off the
  intersection $\Sigma^{\singZ}\cap Z_{H}$ for every maximal infinite
  virtually cyclic subgroup~$H$ of~$W$.
    
  Let $s,t\in S$ such that $s,t\in H$ for some maximal infinite
  virtually cyclic subgroup~$H$ of $W$.  Then $x\in Z_{H}$ can be a
  common fixed point of~$s$ and~$t$ if and only if~$s$ and~$t$ commute.
  In particular $Z_{H}\cap X^{\singZ}$ can consist of at most~$2$
  points as a virtually cyclic subgroup of $W$ cannot contain more
  than~$2$ pairwise non-commuting Coxeter generators.  Coning off a
  singleton set of a path connected space does not change its homotopy
  type.  And coning off a subset of a path connected space which has
  two points is homotopy equivalent to attaching a $S^{1}$ to it.
  Hence the claim of the lemma follows.
\end{proof}

\begin{lemma}
    \label{lem:top}
    Let $(X,A)$ be a CW-pair and let $B$ be a CW-complex
    which is homotopy equivalent to $A$. Then there exists a
    CW-pair~$(Y,B)$ which is homotopy equivalent to $(X,A)$ such that the
cells of $X\setminus A$
    are dimension wise in a $1$-to-$1$ correspondence to the cells of
    $Y\setminus B$.
\end{lemma}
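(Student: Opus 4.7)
The plan is to take a cellular homotopy equivalence $f\: A\to B$, which exists by cellular approximation, and build $Y$ as the adjunction space $Y = X\cup_f B$, i.e., the pushout of $X\hookleftarrow A\xrightarrow{f}B$. Cellularity of $f$ is what allows the attaching maps of the cells of $X\setminus A$ to descend to attaching maps in $Y$: an $n$-cell of $X\setminus A$ is originally attached via a map $S^{n-1}\to X^{(n-1)}$, and composing with the quotient $X\to Y$ lands inside $B^{(n-1)}$ together with the lower-dimensional cells of $X\setminus A$, which one takes as the $(n-1)$-skeleton of $Y$. This endows $Y$ with a CW structure in which the cells of $Y\setminus B$ correspond dimension-wise bijectively to the cells of $X\setminus A$, as required.

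To establish $(X,A)\homotop(Y,B)$ as pairs, I would use the tautological pair map $\phi\:(X,A)\to(Y,B)$ coming from the pushout: on $X$ it is the canonical map $X\to Y$, and on $A$ it restricts to $f$. The restriction $\phi|_{A}=f$ is a homotopy equivalence by hypothesis. For the total spaces, $X\to Y$ is the canonical map into the pushout $X\cup_{A}B$ along the cofibration $A\hookrightarrow X$; since $f\: A\to B$ is a homotopy equivalence, the gluing lemma for pushouts of cofibrations implies that $X\to Y$ is also a homotopy equivalence. For CW pairs, a pair map which is a homotopy equivalence on both levels is automatically a homotopy equivalence of pairs, so $(X,A)\homotop(Y,B)$.

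The only real technical point is verifying that the pushout $Y=X\cup_{f}B$ inherits a CW structure of the form claimed; this rests on arranging $f$ to be cellular via cellular approximation and on the standard fact that adjunctions of CW complexes along cellular maps are CW. Once that is in place, the pair homotopy equivalence is an immediate consequence of the gluing lemma for cofibrations, with no dimension count or induction required.
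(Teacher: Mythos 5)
Your argument is correct, but it is not the paper's argument: the paper disposes of this lemma with a one-line citation to Geoghegan's Rebuilding Lemma (Theorem~4.1.7 of \cite{geoghegan-08}), whereas you have in effect written out a self-contained proof of that cited result. Your route --- replace $f\: A\to B$ by a cellular homotopy equivalence via cellular approximation, form the adjunction space $Y=X\cup_{f}B$, note that cellularity of $f$ makes $Y$ a CW complex whose cells not in $B$ are precisely the cells of $X\setminus A$ in the same dimensions, and then promote the canonical pair map $(X,A)\to(Y,B)$ to a homotopy equivalence of pairs using the gluing lemma for pushouts along cofibrations together with the fact that a map of CW pairs which is a homotopy equivalence on both the subcomplex and the total space is a homotopy equivalence of pairs --- is essentially the standard proof of the Rebuilding Lemma. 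The citation buys brevity; your explicit construction makes visible exactly what the paper needs downstream in Theorem~\ref{thrm:gd}, namely that $Y$ has the same dimension as $\max(\dim B,\dim X)$ and that $B$ (hence $L$) sits inside $Y$ as an honest subcomplex. The two nontrivial facts you invoke --- that pushing out a homotopy equivalence along a cofibration yields a homotopy equivalence, and the pair version of the Whitehead-type criterion for cofibred pairs --- are both standard and correctly applied, so there is no gap.
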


\begin{proof}
  This follows directly from Theorem~4.1.7
  in~\cite{geoghegan-08}*{p.~104}.
\end{proof}

\begin{theorem}
  \label{thrm:gd}
  Let $(W,S)$ be a Coxeter system with $W$ word hyperbolic and such
  that the nerve $L(W,S)$ of this Coxeter system is an acyclic
  complex, which is not homotopy equivalent to a subcomplex of a
  contractible $2$-complex.  Then $\uugd W=3$.
\end{theorem}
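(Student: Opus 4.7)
I would establish the two inequalities $\uugd W \le 3$ and $\uugd W \ge 3$ separately, with Lemmas~\ref{lem:aux2} and~\ref{lem:top} doing the bulk of the work.

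For the upper bound I would use exactly the model $Y$ for $\uu{E}W$ already constructed in the proof of Lemma~\ref{lem:aux2}. The Davis complex $\Sigma$ is a model for $\underline{E}W$ of dimension $\dim L + 1 = 3$, and the Juan-Pineda--Leary construction of~\cite{juan-pineda-06} turns it into a model for $\uu{E}W$ by equivariantly coning off the $1$-dimensional subcomplexes $Z_H \subset \Sigma$, one for each maximal infinite virtually cyclic subgroup $H$. Since each cone on a $1$-complex is $2$-dimensional, attaching them does not raise the dimension of $\Sigma$, so $\dim Y = 3$ and $\uugd W \le 3$.

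For the lower bound I would argue by contradiction. Suppose $X$ is a model for $\uu{E}W$ with $\dim X \le 2$. Then $X$ is a contractible $W$-CW-complex, and $X^{\singZ} = \bigcup_{s \in S} X^{s}$ is a $W$-invariant subcomplex, so $(X, X^{\singZ})$ is a CW-pair. By Lemma~\ref{lem:aux2}, $X^{\singZ}$ is homotopy equivalent to
\[
   B := L \,\vee \bigvee_{i \in I} S^{1},
\]
which I equip with its natural CW-structure of dimension $\dim L = 2$. Applying Lemma~\ref{lem:top} to $(X, X^{\singZ})$ with this $B$ yields a CW-pair $(Y, B)$ homotopy equivalent to $(X, X^{\singZ})$ and whose cells outside $B$ are in dimension-preserving bijection with those of $X$ outside $X^{\singZ}$. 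Hence $\dim Y \le 2$ and $Y \homotop X$ is contractible. But $L$ is a subcomplex of $B$, and $B$ is a subcomplex of $Y$, so $L$ is realised as a subcomplex of a contractible $2$-complex, contradicting the hypothesis on~$L$.

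The topological content of the argument has essentially been packaged into the two preceding lemmas, so what is left is bookkeeping. The only mildly delicate points I would need to verify are that the Juan-Pineda--Leary construction really does produce a $3$-dimensional model (which follows from $\dim \Sigma = 3$ together with the fact that cones on $1$-complexes are $2$-dimensional) and that Lemma~\ref{lem:top} applies to the CW-pair $(X, X^{\singZ})$ with the chosen CW-structure on $B$; neither point should cause any trouble.
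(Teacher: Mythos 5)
Your proposal is correct and follows essentially the same route as the paper: the upper bound via the Juan-Pineda--Leary elevation of the $3$-dimensional Davis complex by cells of dimension at most $2$, and the lower bound by contradiction using Lemma~\ref{lem:aux2} to identify $X^{\singZ}$ with $L \vee \bigvee S^1$ and Lemma~\ref{lem:top} to replace $X$ by a contractible $2$-complex containing $L$ as a subcomplex. Your write-up in fact makes explicit a couple of points the paper leaves implicit (that $(X, X^{\singZ})$ is a CW-pair and why $\dim Y \le 2$), but the argument is the same.
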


\begin{proof}
  Assume towards a contradiction that there exists a $2$-dimensional
  model $X$ for $\uu EW$.  Then $X^{\singZ}$ is homotopy equivalent to
  $L\vee \bigvee S^{1}$ by Lemma~\ref{lem:aux2}.  By
  Lemma~\ref{lem:top} there exists a $2$-dimensional CW-complex $Y$
  which is homotopy equivalent to $X$ and which contains $L\vee
  \bigvee S^{1}$.  In particular $L$ is a subcomplex of $Y$
  contradicting the assumption that $L$ does not embed into a
  contractible $2$-complex.  Thus $\uugd W\geq 3$.
    
  On the other hand, the Davis complex $\Sigma$ is a model for
  $\underline EW$ and $\dim \Sigma = \dim L + 1= 3$. Since $W$ is word
  hyperbolic we can elevate $\Sigma$ to a model for $\uu EW$ by
  attaching orbits of cells in dimension $2$ and
less,~cf.~\cite{juan-pineda-06}. Thus~$\uugd W\leq 3$ and equality holds.
\end{proof}


\section{The Cohomological Dimension}

\begin{theorem}
  \label{thrm:cd}
  Let $(W,S)$ be a Coxeter system with $W$ word hyperbolic and such
  that the nerve $L(W,S)$ of this Coxeter system is an acyclic complex
  which is not homotopy equivalent to a subcomplex of a
contractible $2$-complex. Then $\uucd W= 2$.
\end{theorem}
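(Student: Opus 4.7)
To prove $\uucd W = 2$, we need the upper bound $\uucd W \leq 2$ and the lower bound $\uucd W \geq 2$. The bound $\uucd W \leq 3$ is automatic from Theorem~\ref{thrm:gd}. The lower bound should follow either by comparison with $\ucd W = 2$ (from Brady--Leary, which applies under these hypotheses) or by direct computation: for instance, any two-simplex $T$ of $L$ yields a rank-three finite Coxeter subgroup $W_{T} \leq W$, whose presence forces $H^{2}_{\Fvc}(W; M) \neq 0$ for suitable coefficients $M$.

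The real content is $\uucd W \leq 2$, which we establish by constructing a two-dimensional projective resolution of the trivial $\OC{W}{\Fvc}$-module $\underline{\Z}$ algebraically (no such two-dimensional geometric model exists, by Theorem~\ref{thrm:gd}). The starting point is the cellular chain complex of the three-dimensional model $Y$ for $\uu EW$ used in Lemma~\ref{lem:aux2}, which gives a length-three projective resolution
\begin{equation*}
0 \to C_{3}(Y) \to C_{2}(Y) \to C_{1}(Y) \to C_{0}(Y) \to \underline{\Z} \to 0.
\end{equation*}
Since $Y^{H}$ is contractible for every $H \in \Fvc(W)$ and has dimension at most three, $H_{3}(Y^{H}; \Z) = 0$, so the top boundary $\partial_{3}$ is injective at each object $W/H$ and hence as a map of $\OC{W}{\Fvc}$-modules. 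Truncating produces the length-two exact sequence
\begin{equation*}
0 \to C_{2}(Y)/\mathrm{im}(\partial_{3}) \to C_{1}(Y) \to C_{0}(Y) \to \underline{\Z} \to 0,
\end{equation*}
and it remains to show the top term is projective, equivalently to split $0 \to C_{3}(Y) \to C_{2}(Y) \to C_{2}(Y)/\mathrm{im}(\partial_{3}) \to 0$.

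The splitting should be engineered using the acyclicity of $L$: the three-cell orbits of $Y$ are in bijection with two-simplices of $L$, and the boundary maps at each object $W/H$ are governed by the combinatorics of $L$ together with the Juan-Pineda--Leary modifications of $\Sigma$. A contracting chain homotopy for the reduced cellular chain complex of $L$, which exists since $L$ is acyclic, should provide a compatible splitting, with word hyperbolicity ensuring that the two-cells attached to build $Y$ from $\Sigma$ do not disrupt the structure at infinite virtually cyclic fixed sets. The main obstacle is making the splitting functorial in the orbit category, so that the quotient is genuinely $\OC{W}{\Fvc}$-projective rather than merely projective objectwise; this is expected to be the technically delicate step. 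An alternative, possibly cleaner route is to exploit the L\"uck--Weiermann pushout decomposition of $\uu EW$ built from $\underline E W$ and from cones over $W \times_{N_{W}(H)} \underline E N_{W}(H)$ for each conjugacy class of maximal infinite virtually cyclic subgroup~$H$: hyperbolicity implies $N_{W}(H)$ is virtually cyclic and $\underline E N_{W}(H) \homotop \mathbb{R}$, so the resulting Mayer--Vietoris sequence combines Brady--Leary's $\ucd W = 2$ with at-most-two-dimensional contributions from each $H$, yielding $H^{n}_{\Fvc}(W; M) = 0$ for $n \geq 3$.
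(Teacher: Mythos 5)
Your proposal does not close either half of the theorem. For the upper bound, your primary route (truncate the length-three resolution coming from the three-dimensional model $Y$ and split off $C_3(Y)$) stalls at exactly the decisive point: you need the quotient $C_2(Y)/\operatorname{im}(\partial_3)$ to be projective as an $\OFW$-module, and you yourself flag the functoriality of the proposed splitting as ``the technically delicate step'' without supplying it. A contracting homotopy for $\underline C_*(L)$ is a statement about abelian groups; there is no evident way to promote it to a natural transformation of $\OFW$-modules, and nothing in your sketch explains how the acyclicity of $L$ (as opposed to, say, contractibility) would be used. This is a genuine gap, not a routine verification. The paper avoids the problem entirely: it splits $\underline\Z$ by the short exact sequence $0\to Z\to\underline\Z\to Q\to 0$, where $Z$ is supported on the finite subgroups, shows $\pd Z\leq 2$ because the $2$-dimensional complex $\Sigma^{\sing}$ is acyclic (this is where the hypothesis on $L$ enters, via \cite{brady-01}), shows $\pd Q\leq 2$ because a model for $\uu EW$ is obtained from $\underline EW$ by attaching cells of dimension at most $2$ (this is where hyperbolicity enters, via \cite{juan-pineda-06}), and concludes by the Horseshoe Lemma. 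Your alternative L\"uck--Weiermann/Mayer--Vietoris sketch is essentially this same decomposition in disguise and could be made to work, but as written it is only an outline and you do not commit to it.

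The lower bound is where your proposal is actually wrong rather than merely incomplete. ``Comparison with $\ucd W=2$'' is not a valid inference: there is no general inequality $\uucd G\geq\ucd G$ (for $G=\Z$ one has $\ucd\Z=1$ but $\uucd\Z=0$), so enlarging the family can strictly decrease the cohomological dimension, and any comparison argument here would itself require the vanishing input you are trying to avoid. Likewise, a finite special subgroup $W_T$ cannot ``force'' $H^2_{\Fvc(W)}(W;M)\neq 0$ by restriction, since every subgroup of a finite group is virtually cyclic and hence $\cd_{\Fvc(W_T)}W_T=0$; you would need a genuinely different mechanism and you do not indicate one. The paper instead quotes Corollary~16 of \cite{juan-pineda-06}, which gives $H^2(\uu EW/W;\Z)\neq 0$, and identifies this group with $H^2_{\Fvc(W)}(W;\underline\Z)$ to get $\uucd W\geq 2$ directly.
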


\begin{proof}
    Let $\frakF$ be the family of virtually cyclic subgroups of $W$.
    Let $Z$ be the submodule of the trivial $\OFW$-module
    given by $Z(G/H)=\Z$ for any finite subgroup $H$ of $W$ and which
    is $0$ otherwise.  The complex $\Sigma^{\sing}$ is acyclic and
    $2$-dimensional by~\cite{brady-01} and it follows that $\underline
    C_{*}(\Sigma^{\sing})$ gives a projective resolution of $Z$ of
    length $2$.  Thus $\pd Z\leq 2$.
    
    On the other hand, if $X$ is a model for $\underline EW$, then a
    model $Y$ for $\uu EW$ can be obtained from $X$ by attaching
    orbits of cells in dimension $2$ and
less~\cite{juan-pineda-06}*{Proposition~9}.
    It follows that $\underline C_{*}(Y,X)$ gives a free resolution of~$Q =
\underline \Z/Z$ of length $2$.  Thus $\pd Q \leq 2$.
    
    Consider the short exact sequence
    \begin{equation*}
        0 \to Z \to \underline \Z \to 
	Q \to 0
    \end{equation*}
    of $\OFW$-modules. Since $\pd Z$ and $\pd Q$ 
    are bounded by $2$ it follows by the Horseshoe Lemma that $\uupd 
    \underline \Z\leq 2$, that is~$\uucd W\leq 2$.
    
    On the other hand, it follows from~\cite{juan-pineda-06}*{Corollary~16} that
the quotient space $\uu EW/W$ has non-trivial cohomology in dimension $2$, and
thus $H_{\frakF}^{2}(W;\underline \Z)$ must be non-trivial too, cf.~Theorem~4.2
in~\cite{fluch-phdthesis}*{p.~83}.  As a consequence we get~$\uucd W\geq 2$ and
therefore the claim follows.
\end{proof}



\section*{References}

\begin{biblist}
    \bib{brady-01}{article}{
    author={Brady, N.},
    author={Leary, I.~J.},
    author={Nucinkis, B.~E.~A.},
    title={On algebraic and geometric dimensions for groups with torsion},
    date={2001},
    ISSN={0024-6107},
    journal={J. London Math. Soc. (2)},
    volume={64},
    number={2},
    pages={489\ndash 500},
    review={\MR{1853466 (2002h:57007)}},
    }

    \bib{bridson-99}{book}{
    author={Bridson, M.~R.},
    author={Haefliger, A.},
    title={Metric spaces of non-positive curvature},
    series={Grundlehren der Mathematischen Wissenschaften [Fundamental
    Principles of Mathematical Sciences]},
    publisher={Springer-Verlag},
    address={Berlin},
    date={1999},
    volume={319},
    ISBN={3-540-64324-9},
    review={\MR{1744486 (2000k:53038)}},
    }
    
    \bib{davis-83}{article}{
    author={Davis, M.~W.},
    title={Groups generated by reflections and aspherical manifolds not
    covered by {E}uclidean space},
    date={1983},
    ISSN={0003-486X},
    journal={Ann. of Math. (2)},
    volume={117},
    number={2},
    pages={293\ndash 324},
    url={http://dx.doi.org/10.2307/2007079},
    review={\MR{690848 (86d:57025)}},
    }
    
    \bib{davis-08}{book}{
    author={Davis, M.~W.},
    title={The geometry and topology of {C}oxeter groups},
    series={London Mathematical Society Monographs Series},
    publisher={Princeton University Press},
    address={Princeton, NJ},
    date={2008},
    volume={32},
    ISBN={978-0-691-13138-2; 0-691-13138-4},
    review={\MR{2360474 (2008k:20091)}},
    }
\bib{dranishnikov-99}{article}{
   author={Dranishnikov, A.~N.},
   title={Boundaries of Coxeter groups and simplicial complexes with given
   links},
   journal={J. Pure Appl. Algebra},
   volume={137},
   date={1999},
   number={2},
   pages={139--151},
   issn={0022-4049},
   review={\MR{1684267 (2000d:20069)}},
   doi={10.1016/S0022-4049(97)00202-8},
}
	
    \bib{dunwoody-79}{article}{
      author={Dunwoody, M.~J.},
      title={Accessibility and groups of cohomological dimension one},
      date={1979},
      ISSN={0024-6115},
      journal={Proc. London Math. Soc. (3)},
      volume={38},
      number={2},
      pages={193\ndash 215},
      review={\MR{531159 (80i:20024)}},
    }

    \bib{eilenberg-57}{article}{
    author={Eilenberg, S.},
    author={Ganea, T.},
    title={On the {L}usternik--{S}chnirelmann category of abstract groups},
    date={1957},
    ISSN={0003-486X},
    journal={Ann. of Math. (2)},
    volume={65},
    pages={517\ndash 518},
    review={\MR{MR0085510 (19,52d)}},
    }

   \bib{farrell-jones}{article}{
   author={Farrell, F. T.},
   author={Jones, L. E.},
   title={Computations of stable pseudoisotopy spaces for aspherical
   manifolds},
   conference={
      title={Algebraic topology Pozna\'n 1989},
   },
   book={
      series={Lecture Notes in Math.},
      volume={1474},
      publisher={Springer},
      place={Berlin},
   },
   date={1991},
   pages={59--74},
   review={\MR{1133892 (92k:57038)}},
}

    \bib{fluch-phdthesis}{article}{
    author={Fluch, M.},
    title={On {B}redon (Co-)Homological Dimensions of Groups},
    date={2010},
    journal={Ph.D.~thesis, University of Southampton},
    url={http://arxiv.org/abs/1009.4633v1},
    eprint={arXiv:1009.4633v1},
    }
           
    \bib{geoghegan-08}{book}{
    author={Geoghegan, R.},
    title={Topological methods in group theory},
    series={Graduate Texts in Mathematics},
    publisher={Springer},
    address={New York},
    date={2008},
    volume={243},
    ISBN={978-0-387-74611-1},
    review={\MR{2365352}},
    }

    \bib{juan-pineda-06}{incollection}{
    author={Juan--Pineda, D.},
    author={Leary, I.~J.},
    title={On classifying spaces for the family of virtually cyclic
    subgroups},
    date={2006},
    booktitle={Recent developments in algebraic topology},
    series={Contemp. Math.},
    volume={407},
    publisher={Amer. Math. Soc.},
    address={Providence, RI},
    pages={135\ndash 145},
    review={\MR{2248975 (2007d:19001)}},
    }
    
    \bib{luck-89}{book}{
    author={L{\"u}ck, W.},
    title={Transformation groups and algebraic {$K$}-theory},
    series={Lecture Notes in Mathematics},
    publisher={Springer-Verlag},
    address={Berlin},
    date={1989},
    volume={1408},
    ISBN={3-540-51846-0},
    note={Mathematica Gottingensis},
    review={\MR{1027600 (91g:57036)}},
    }
    
    \bib{luck-00}{incollection}{
    author={L{\"u}ck, W.},
    author={Meintrup, D.},
    title={On the universal space for group actions with compact isotropy},
    date={2000},
    booktitle={Geometry and topology: Aarhus (1998)},
    series={Contemp. Math.},
    volume={258},
    publisher={Amer. Math. Soc.},
    address={Providence, RI},
    pages={293\ndash 305},
    review={\MR{1778113 (2001e:55023)}},
    }

    \bib{stallings-68a}{article}{
      author={Stallings, J.~R.},
      title={Groups of dimension 1 are locally free},
      date={1968},
      ISSN={0002-9904},
      journal={Bull. Amer. Math. Soc.},
      volume={74},
      pages={361\ndash 364},
      review={\MR{0223439 (36 \#6487)}},
    }

    \bib{swan-69}{article}{
      author={Swan, R.~G.},
      title={Groups of cohomological dimension one},
      date={1969},
      ISSN={0021-8693},
      journal={J. Algebra},
      volume={12},
      pages={585\ndash 610},
      review={\MR{0240177 (39 \#1531)}},
    }

    \bib{symonds-05}{article}{
      author={Symonds, P.},
      title={The {B}redon cohomology of subgroup complexes},
      date={2005},
      ISSN={0022-4049},
      journal={J. Pure Appl. Algebra},
      volume={199},
      number={1--3},
      pages={261\ndash 298},
      review={\MR{2134305 (2006e:20093)}},
    }
\end{biblist}
\end{document}